\newcounter{dummy}
\def\thmref@flush{%
   \ifx\thmref@last\empty\else
      \ifthmref@comma, \thmref@finaltrue\fi \thmref@commatrue
      \thmref@last \ifx\thmref@stack\empty\else s\fi \thmref@num 0
      \let\do\thmref@one \thmref@stack
      \ifcase\thmref@num\or\space and\else\thmref@finaltrue, and\fi
      ~\ref{\thmref@head}\let\thmref@stack\empty\fi}
\def\thmref@one#1{\ifnum\thmref@num>0,\fi
   \space\ref{#1}\advance\thmref@num 1\relax}
\newcommand\myitem[1][]{\item[#1]\refstepcounter{dummy}\def\@currentlabel{#1}}
\newcommand{\E}{\mathbf{E}}
\renewcommand{\P}{\mathbf{P}}
\newcommand{\1}{\mathbf{1}}
\DeclareMathOperator{\Bin}{Bin}
\DeclarePairedDelimiter\abs{\lvert}{\rvert}%
\DeclarePairedDelimiter\floor{\lfloor}{\rfloor}%
\DeclarePairedDelimiter\ceil{\lceil}{\rceil}%
\DeclarePairedDelimiter\ii{\llbracket}{\rrbracket}%
\newcommand{\erase}{\mathsf{GreedyPath}}
\newcommand{\ZZ}{\mathbb{Z}}
\newcommand{\NN}{\mathbb{N}}
\newcommand{\instr}{\mathsf{instr}}
\newcommand{\Instr}{\mathsf{Instr}}
\newcommand{\eosos}[1]{\mathcal{O}_{#1}}
\newcommand{\ip}[1]{\mathcal{I}_{#1}}
\newcommand{\s}{\mathfrak{s}}
\newcommand{\critical}{\rho}
\newcommand{\lpcritical}{\rho_*}
\newcommand{\rt}[2]{\mathcal{R}_{#2}(#1)}
\newcommand{\lt}[2]{\mathcal{L}_{#2}(#1)}
\newcommand{\emax}{e_{\max}}
\DeclareSymbolFont{stixletters}{LS1}{stix}{m}{it}
\DeclareMathAccent{\cev}{\mathord}{stixletters}{"91}
\DeclareMathAccent{\vec}{\mathord}{stixletters}{"92}
\DeclareMathAccent{\vecev}{\mathord}{stixletters}{"95}
\newcommand{\greedy}[1][k]{\text{$k$-\erase}}
\newcommand{\m}{\mathfrak{m}}
\newcommand{\crist}[1][\@nil]{%
\def\tmp{#1}%
   \ifx\tmp\@nnil
       \lpcritical
    \else
       \lpcritical^{(#1)}
    \fi}
\newcommand{\infset}[1][\@nil]{
  \def\tmp{#1}%
   \ifx\tmp\@nnil
       \zeta
    \else
       \zeta^{#1}
    \fi}
\newcommand{\altinfset}[1][\@nil]{
  \def\tmp{#1}%
   \ifx\tmp\@nnil
       \widetilde{\zeta}
    \else
       \widetilde{\zeta}^{#1}
    \fi}
\newcommand{\binfset}[1][\@nil]{
  \def\tmp{#1}%
   \ifx\tmp\@nnil
       \cev{\zeta}
    \else
       \cev{\zeta}^{#1}
    \fi}
\newcommand{\Left}{\ensuremath{\mathtt{left}}}
\newcommand{\Right}{\ensuremath{\mathtt{right}}}
\newcommand{\Sleep}{\ensuremath{\mathtt{sleep}}}
\newtheorem{thm}{Theorem}
\newtheorem{lemma}[thm]{Lemma}
\newtheorem{prop}[thm]{Proposition}
\newtheorem{cor}[thm]{Corollary}
\newtheorem*{conjecture*}{Density Conjecture}
\theoremstyle{remark}
\theoremstyle{definition}
\definecolor{hancolor}{rgb}{0.0 0.0, 1.0}
\newcommand{\critFE}{\critical_{\mathtt{FE}}}
\newcommand{\critDD}{\critical_{\mathtt{DD}}}
\title[The hockey-stick conjecture for activated random walk]{The hockey-stick conjecture for\\ activated random walk}
\author{Christopher Hoffman}
 \address{Christopher Hoffman, Department of Mathematics, University of Washington}
 \email{\texttt{choffman@uw.edu}}
 \author{Tobias Johnson}
 \address{Tobias Johnson, Departments of Mathematics,  College of Staten Island, City University of New York}
\email{\texttt{tobias.johnson@csi.cuny.edu}}
 \author{Matthew Junge}
 	\address{Matthew Junge, Department of Mathematics, Baruch College, City University of New York}
	\email{\texttt{matthew.junge@baruch.cuny.edu}}
\begin{document}

\begin{abstract}
    We prove a conjecture of Levine and Silvestri that the driven-dissipative activated random walk model on an interval drives itself directly to and then sustains a critical density. This marks the first rigorous confirmation of a sandpile model behaving as in Bak, Tang, and Wiesenfeld's original vision of self-organized criticality.

\end{abstract}

\maketitle 

\section{Introduction}

 The theory of \emph{self-organized criticality} was introduced by Bak, Tang, and Wiesenfeld in the late 1980s to explain how critical-like behavior resembling lab-tuned phase transitions appears throughout nature without external tuning \cite{BakTangWiesenfeld87, BakTangWiesenfeld88}. Their seminal work has proven highly influential, with over ten thousand citations across diverse fields. The basic premise is that gradual tensioning among many components that is occasionally released in sudden bursts causes systems---such as snow slopes, tectonic plates, and star surfaces---to hover in complex states with power-law and fractal expressions. Their motivating illustration was of a growing sandpile on a table that reaches then sustains a critical slope once the sand begins to spill off the edges. Quoting \cite{BakTangWiesenfeld88},

 \begin{quote}
     To illustrate the basic idea of self-organized criticality in a transport system, consider a simple pile of sand. Suppose we start from scratch and build the pile by randomly adding sand, a grain at a time. The pile will grow, and the slope will increase. Eventually, the slope will reach a critical value (called the angle of repose); if more sand is added it will slide off.\,\ldots The critical state is an
     attractor for the dynamics.
 \end{quote}
 
 Bak, Tang, and Wiesenfeld introduced the \emph{abelian sandpile} as a model for self-organized criticality,
 emphasizing that they were ``interested in the general behavior of nonlinear diffusion dynamics\ldots
 and not in sand piles, per se.'' Based on simulations, they argued that the model
 behaved as in their illustration, increasing in density with the addition of particles
 until reaching a critical state. Crucially, they claimed that ``[o]nce the critical point is reached, 
 the system stays there.''
 
\begin{figure}\centering
    \begin{tikzpicture}[xscale=10/3,yscale=5/2]
      \draw[->] (-.03,0)--(3.1,0) node[right] {$\rho$};
      \draw[->] (0,-.06)--(0,2.1) node[above] {$D_\rho$};
      \draw[thick,blue] plot[smooth] file {nonuniversal.table};
      \foreach \x in {0,.5,...,3}
        \draw (\x,0) -- +(0,-.04) node[below,font=\small] {$\x$};
      \foreach \y in {0,.5,...,2}
        \draw (0,\y) -- +(-.03,0) node[left,font=\small] {$\y$};
      \draw (1.62,1.6) rectangle (1.92,1.71);
      \begin{scope}[shift={(1.8,.23)},xscale=4,yscale=80,shift={(-1.62,-1.6575)}]
        \draw (1.62,1.6575) rectangle (1.92,1.6725);
        \draw[thick,blue] plot[smooth] file {nonuniversalzoom.table};      
        \draw[dotted] (1.668898,1.6575)--(1.668898,1.6725);
        \draw[dotted] (1.62,1.668898)--(1.92,1.668898);
        \draw[dotted] (1.62,1.666666)--(1.92,1.666666);
        \foreach \x in {1.65,1.75,1.85}
          \draw (\x,1.6575) -- +(0,-.0005) node[below,font=\small] {$\x$};
        \draw (1.668898,1.6725) node[below,font=\small]{$1.6689$} (1.668898,1.6725) -- +(0,.0005) ;        
        \foreach \y in {1.66,1.665,1.67}
          \draw (1.62,\y) -- +(-.01,0) node[left,font=\small] {$\y$};
        \draw (1.92,1.668898) -- +(.01,0) node[right,font=\small] {$1.6689$};
        \draw (1.92,1.666666) -- +(.01,0) node[right,font=\small] {$\frac53$};
      \end{scope}
      
    \end{tikzpicture}
    \caption{\textbf{The limiting empirical density profile for the abelian sandpile
    on the flower graph as proven in \cite{fey2010approach}.}
    Though it appears at first glance to have the shape established in Theorem~\ref{thm:hockey},
    it in fact grows to a constant approximately equal to $1.6689$ before
    decreasing toward its asymptotic limit $5/3$. The numerical data given by Fey, Levine,
    and Wilson indicate similar behavior for the abelian sandpile on the two-dimensional lattice.
    } \label{fig:flower}
\end{figure}
 Many years later, Fey, Levine, and Wilson refuted this claim \cite{fey2010approach,fey2010driving}.
 Their evidence suggests that the two-dimensional abelian sandpile increases in density
 until it reaches the critical density for an infinite version of the model with a tuned phase transition,
 but then slowly decreases to a limiting density differing in the fourth decimal place.
 They rigorously proved this behavior for the abelian sandpile on several more tractable graphs
 (see Figure~\ref{fig:flower}). They identified this behavior as one of several examples
 of nonuniversality for the abelian sandpile caused by its slow mixing (see also
 \cite{jo2010comment,levine2015threshold,hough2019sandpiles,hough2021cutoff}).

 A sandpile model with stochastic evolution called \emph{activated random walk} (ARW) was introduced in \cite{dickman2010activated} and is believed to be an adequately universal model of self-organized criticality. 
 It can be formulated as an interacting particle system on a graph with active and sleeping  particles. Active particles perform simple random walk at exponential rate 1. When an active particle is alone, it falls asleep at exponential rate $\lambda \in (0,\infty)$. Sleeping particles remain in place but become active if an active particle moves to their site. If all active particles fall asleep, the system stabilizes with every site in the final configuration either empty or containing exactly one sleeping particle.

 In the \emph{driven-dissipative ARW with {uniform driving}}, active particles are added one at a time to a uniformly random site in a finite $d$-dimensional box viewed as a subgraph of $\mathbb Z^d$. The boundary edges lead to sinks that trap particles. In each step, the process runs after a particle is added until
it reaches a stable configuration with all particles either asleep or trapped at a sink.
Then in the next step, another driving particle is added and the process continues.
Viewed as a Markov chain on configurations of sleeping particles, the driven-dissipative version of ARW on a finite box is known to have a unique stationary distribution \cite{levine2021exact}. 
It was conjectured that the expected density of particles in the stationary distribution on a box
converges to a constant $\critDD=\critDD(d,\lambda)$ as the box size grows.

  The \emph{fixed-energy ARW} on $\ZZ^d$ starts with an ergodic initial configuration of density $\rho$. 
  This model has a critical density $\critFE= \critFE(d,\lambda)$:
  for $\rho < \critFE$, at each site activity ceases eventually while for $\rho > \critFE$ activity persists for all time \cite{rolla2019universality}. Much focus has been given to proving that $0<\critFE(d,\lambda) <1$ for all $d\geq 1$ and $\lambda >0$ \cite{rolla2012absorbing,SidoraviciusTeixeira17,StaufferTaggi18,BasuGangulyHoffman18,HoffmanRicheyRolla20,hu2022active,forien2022active,asselah2024critical}. 
    
 Dickman, Mu\~noz, Vespignani, and Zapperi clarified the theory of self-organized criticality by hypothesizing that  sandpile models should organize at the critical value corresponding to a phase transition in a conventional parametrized variant \cite{dickman1998self,dickman2000paths}. For example, their \emph{density conjecture} relates driven-dissipative ARW to fixed-energy ARW via the claim that $\critDD$ exists and is equal to $\critFE$  \cite{dickman1998self,rolla2020activated}.
 Fey, Levine, and Wilson's work refuted the density conjecture for the abelian sandpile model
 and demonstrated that its density does not evolve as predicted by Bak, Tang, and Wiesenfeld
 \cite{fey2010approach,fey2010driving}. (We note that these results are very convincing
 but are based on simulations, and it remains open to rigorously prove them.)
 We recently proved the density conjecture for ARW in dimension one \cite{hoffman2024density}, providing
 evidence for the universality of the model. It is thus a natural question
 whether driven-dissipative ARW follows Bak, Tang, and Wiesenfeld's prediction, with density
 growing to $\critFE$ and then remaining there.
 Levine and Silvestri called this the \emph{hockey-stick conjecture}
\cite[Conjecture 17]{levine2023universality}, after the shape of the limiting profile (see Figure~\ref{fig:hockey}).

We prove the hockey-stick conjecture for ARW in dimension one.
We emphasize that this behavior has never been rigorously established in any sandpile model.
In showing that ARW is attracted to a single critical density coinciding with that of a parameterized 
variant, we provide further theoretical evidence for Dickman et al.'s explanation of 
self-organized criticality.  Note that in our follow-up work \cite{hoffman2025cutoff}, we prove the stronger statement that driven-dissipative ARW with uniform driving converges to the stationary state after adding $\critFE n + o(n)$ particles as conjectured in \cite{levine2021exact}.

\begin{figure}
    \centering
    \begin{tikzpicture}[xscale=9,yscale=4.5]
      \draw[->] (-.03,0)--(1.3,0) node[right] {$\rho$};
      \draw[->] (0,-.06)--(0,1) node[above] {$D_\rho$};
      \foreach \x in {0, .25,.5,.75,.889,1,1.25}
        \draw (\x,0)--(\x,-.06) node[below] {$\x$};
      \foreach \y in {0,.25,.5,.75,.889}
        \draw (0,\y)--(-.03,\y) node[left] {$\y$};
      \draw plot[only marks,mark=*,mark size=.01pt,mark options={blue}] file {hockey2500-2000-.8.table};
    \end{tikzpicture}

  \caption{\textbf{A sample path of $D_\rho(n,\lambda)$ with $n=2000$ and $\lambda=.8$.}
    The graph shows the empirical density as $2500$ particles are added uniformly
    at random one at a time, stabilizing after each addition, on an interval of length~$2000$.
    The critical density $\critFE$ appears to be approximately $.889$.
    The sample path is a near-perfect match to the limiting empirical density
    profile $\rho\mapsto\min(\rho,\critFE)$. See \cite{levine2023universality} for a similar simulation in dimension two.}
    \label{fig:hockey}
\end{figure}

We state our result in more detail now. For a formal definition
and construction of ARW as a continuous-time Markov process, see \cite{rolla2020activated}.
 Consider driven-dissipative
 ARW with uniform driving on $\ii{1,n} :=\{1,\dots,n\}$ with sinks at $0$
 and $n+1$ that trap particles. Starting from an empty configuration, let $Y_{t}= Y_t(n,\lambda)$ be the number of sleeping particles in $\ii{1,n}$ after $t$ steps of the chain, 
 and let $D_\rho = D_\rho(n,\lambda) := Y_{\lceil \rho n \rceil}(n,\lambda) / n$ be the \emph{empirical density} of sleeping particles after $\lceil \rho n\rceil$ steps.
 Let $\critFE=\critFE(\lambda)$ be the critical density for fixed-energy
 ARW on $\ZZ$ with sleep rate $\lambda>0$. 
\begin{thm}\thlabel{thm:hockey}
For any $\lambda,\rho,\epsilon >0$,
\begin{align}\label{eq:hockey}
  \P\Bigl(\abs[\big]{D_\rho(n,\lambda) - \min(\rho,\critFE(\lambda))}>\epsilon\Bigr)\leq Ce^{-cn}
\end{align}
for constants $c,C>0$ depending only on $\lambda$ and $\epsilon$.
\end{thm}

The conjecture as originally proposed by Levine and Silvestri was that 
$D_\rho(n,\lambda)\to\min(\rho,\critFE(\lambda))$ in probability as $n\to\infty$
for any fixed $\lambda$ and $\rho$. Our version is stronger and implies
that $D_\rho$ and $\min(\rho,\critFE)$ are close in $\sup$-norm over a growing interval:

\begin{cor}\thlabel{cor:sup.norm}
  For all $\epsilon>0$, there exists some
  $\alpha=\alpha(\epsilon,\lambda)>1$ such that
  \begin{align*}
    \P\biggl( \sup_{0<\rho<\alpha^n}\abs[\big]{D_\rho-\min(\rho,\critFE)} > \epsilon \biggl)\to 0.
  \end{align*}  
\end{cor}

To prove \thref{thm:hockey}, results from \cite{hoffman2024density} immediately
show that $D_\rho$ is unlikely to be much larger than $\min(\rho,\critFE)$.
Our work comes in showing that it is unlikely to be smaller, which follows from
the following proposition stating that only a small quantity of particles
exit the interval when stabilizing a subcritical or critical
quantity of randomly placed particles.
 Define a \emph{configuration} of particles $\sigma\in \{\s, 0, 1, 2, \hdots\}^{\ii{1,n}}$
to consist of counts of active particles at each site, with $\s$ denoting a lone sleeping particle.
\begin{prop}\thlabel{thm:lower.bound}
  Let $0<\rho\leq\critFE$ and let
  $\sigma$ consist of $\ceil{\rho n}$ 
  active particles placed independently and uniformly at random on $\ii{1,n}$.
  Let $M$ be the maximum of the number of particles ejected to the left and right sinks when
  stabilizing $\sigma$. 
  For $\epsilon>0$, there exist constants $c,C>0$ that only depend on $\epsilon$ and $\lambda$ such that
  \begin{align}\label{eq:lower.bound.ind}
    \P\bigl(M(\sigma)\leq \epsilon n/2 \bigr) \geq 1 - C e^{-cn}.
  \end{align}
\end{prop}

The proof of \thref{thm:lower.bound} uses a general theory that was developed in 
\cite{hoffman2024density} to prove the density conjecture.
In the standard \emph{sitewise representation} of ARW, we view the particles as moving according
to stacks of random instructions placed on the sites of the graph. An \emph{odometer} counts
the number of instructions executed at each site. The \emph{stabilizing odometer} on the interval
is the one produced when the system runs to stability, with all particles sleeping or absorbed
at the sink. 
According to the \emph{least-action principle},
the stabilizing odometer is minimal in the class of \emph{stable odometers}, which are defined as
respecting mass-balance equations at each site.
Thus, any stable odometer gives an upper bound on the true stabilizing
odometer and hence an upper bound on the loss of particles when stabilizing.
For precise definitions of these terms, we refer the reader to
the standard reference \cite{rolla2020activated} and to the appendix.
To produce a suitable stable odometer, we use the theory from \cite{hoffman2024density},
which embeds each stable odometer as an \emph{infection path} in
a directed $(2+1)$-dimensional process called \emph{layer percolation}.

The difficulty in our proof comes
in this final step of producing a stable odometer using layer percolation.
In \cite{hoffman2024density}, we carry this out for initial configurations either of
a single particle at all sites or of a mass of particles all on a single site.
These constructions are delicate and rely on these regular
starting configurations. To prove the hockey-stick conjecture, we need
to consider initial configurations of randomly placed particles. To do so,
we develop a more robust technique to get upper bounds on the stabilizing odometer.
The main idea is to construct two separate odometers for applying the least-action principle,
one to get a bound at the left endpoint and one at the right, thus avoiding the difficulty
of producing a single stable odometer yielding near-optimal bounds at both endpoints simultaneously.
This improvement on the methods of \cite{hoffman2024density} is of independent interest
and should prove useful for producing sharp bounds on the stabilizing odometer in a variety
of circumstances.

In the next section, we prove \thref{thm:hockey,cor:sup.norm} under the assumption of
\thref{thm:lower.bound}. Then in the final section we give the proof of \thref{thm:lower.bound} and describe
in more detail how its techniques differ from previous ones. Because we rely so heavily
on the new and lengthy paper \cite{hoffman2024density}, we have included appendices extracting important
definitions and results for easy reference.

\section{Proof of the hockey-stick conjecture}

We start with the derivation of \thref{thm:hockey} from \thref{thm:lower.bound}.
According to the definition of the driven-dissipative Markov chain, the density $D_\rho$
is found by adding $\ceil{\rho n}$ particles in sequence, stabilizing the system after each addition.
We will usually take an alternate view of $D_\rho$ as the density after adding all
$\ceil{\rho n}$ particles and stabilizing the system once.
This procedure takes us to the same final configuration when using the sitewise representation
by the abelian property of ARW (see the appendix or \cite[Lemma~2.4]{rolla2020activated}).

    \begin{proof}[Proof of \thref{thm:hockey}]
      For the upper bound on $D_\rho$, we first observe that for $\rho\leq\critFE$ we have
      $D_\rho\leq \ceil{\rho n}/n\leq \rho+1/n$ deterministically.
      For $\rho>\critFE$, we apply \thref{peanut butter} to show that 
      $\P(D_{\rho}> \critFE + \epsilon) \leq C e^{-cn}$ for constants $C$ and $c$ that 
      depend only on $\epsilon$ and $\lambda$.
      
      For the lower bound, if $\rho\leq\critFE$ then \thref{thm:lower.bound} directly shows
      that $\P(D_{\rho}< \rho - \epsilon) \leq C e^{-cn}$ for constants $C$ and $c$ that 
      depend only on $\epsilon$ and $\lambda$.
      To handle the $\rho>\critFE$ case, we note that 
      $D_\rho$ is stochastically increasing in $\rho$ \cite[Lemma~7]{forien2025newproof}.
      Hence for $\rho>\critFE$, we have
      \begin{align*}
        \P(D_\rho<\critFE-\epsilon) &\leq \P(D_{\critFE}< \critFE - \epsilon) \leq C e^{-cn}
      \end{align*}
      by the already established $\rho=\critFE$ case.
    \end{proof}

\begin{proof}[Proof of \thref{cor:sup.norm}]
  Since $D_\rho$ is a step function jumping at $\rho=k/n$ for integers $k$, it is enough to control
  it at these points. To do so, we
  apply \thref{thm:hockey} at $\rho=k/n$ and take a union bound over all integers $0<k<n\alpha^n$,
  choosing $\alpha>1$ small enough so that the exponential term in \eqref{eq:hockey} dominates.
\end{proof}

\section{Proof of Proposition~\ref{thm:lower.bound}}

Now comes the main task of this paper, applying the theory from \cite{hoffman2024density} to prove
\thref{thm:lower.bound}.
To distill this theory into a few sentences, it embeds the \emph{stable odometers}
for ARW as \emph{infection paths} in a $(2+1)$-dimensional directed process
we call \emph{layer percolation} (see Appendix~\ref{sec:notation} for definitions).
It is established using this connection that the critical density $\critFE$ for ARW
is equal to the growth rate $\crist$ for the height of the infected set in layer percolation.
We have tried to make this section comprehensible without close familiarity with
\cite{hoffman2024density} and have included appendices summarizing its definitions and results.
For an in-depth guide to the theory, we refer the reader to \cite{hoffman2024density},
especially its introduction and the examples in its Section~3 demonstrating the embedding.

To prove \thref{thm:lower.bound}, we must construct a stable odometer and apply
the least-action principle (\thref{lem:lap}) to establish an accurate
upper bound on the true stabilizing odometer for ARW with initial configuration
given by $\ceil{\rho n}$ randomly placed particles.
We can produce such a stable odometer by finding an infection path in layer percolation
and translating it back into an odometer, but there are two difficulties.
First, the odometer produced from a length~$n$ infection path is not necessarily
stable at the final site~$n$. In \cite{hoffman2024density}, we use a powerful
result \cite[Lemma~6.1]{hoffman2024density} to show that layer percolation
does contain infection paths corresponding to odometers stable at $n$.
Second, infection paths in layer percolation are in bijection not with stable odometers but with a larger
class of functions called \emph{extended stable odometers}.
These functions are allowed to take negative values that have no real meaning from the perspective
of ARW. Thus, after producing an infection path in layer percolation and translating it over to
an extended stable odometer, we must prove that it takes only nonnegative values.
This is done in \cite[Propositions~8.5 and 8.7]{hoffman2024density} using a method
that relies crucially on the initial configuration being a single contiguous block of particles,
which is satisfied by the two initial configurations---a single particle at all sites and a collection
of particles all at a single site---that are considered there. But the method fails for the
initial configurations in this paper.

We address both of these difficulties at once with an improved technique.
We use layer percolation to construct an extended odometer not on $\ii{1,n}$
but on the larger interval $\ii{0,n+1}$. The resulting odometer is automatically
stable on the interior $\ii{1,n}$ without additional assumptions.
While it is an odometer on a larger interval than $\ii{1,n}$, we can still
use it to apply the least-action principle, once it is shown to take nonnegative values.
One can think of such an odometer as representing a stabilizing odometer
for the system with extra particles injected at the endpoints.

Constructing the odometer on $\ii{0,n+1}$ also makes it easier to establish its nonnegativity.
Freed of the constraint of making the odometer stable at its right endpoint,
we can instead make it as large as possible there. Because it is larger, it is easier
to prove it nonnegative. The cost is that the odometer will be far above the
stabilizing odometer at the right endpoint and would yield a weak bound there.
But at the left endpoint our bound will still be accurate, and by symmetry
the bound applies to the right endpoint as well.
Thus this improved technique yields accurate bounds on the odometer at both endpoints,
thereby bounding the count of particles lost to the sink.

We turn to the proofs now. We will say than an event holds \emph{with overwhelming probability} (w.o.p.)\ if
its failure probability is bounded by $Ce^{-cn}$ for constants $c,C>0$ with possible dependencies
to be specified.

In the bijection between extended stable odometers and infection
paths, a key role is played by the
\emph{minimal odometer} of $\eosos{n+1}(\Instr,\sigma,u_0,f_0)$,
which is the minimal extended
odometer on $\ii{0,n+1}$ stable on $\ii{1,n}$ taking prescribed values at sites $0$ and $1$
(see Appendix~\ref{sec:notation}). We note that the minimal odometer is not the same
as the true stabilizing odometer, which is minimal over a different class of objects. 
We start with a lower bound on the minimal odometer:

    \begin{lemma}\thlabel{lem:min.bounds}
      Let $\sigma$ consist of $\ceil{\rho n}$ active particles placed independently
      and uniformly at random on $\ii{1,n}$ for $\rho\in(0,\critFE]$.
      Let $\rho'\leq\rho$, and
      let $\m$ be the minimal odometer of $\eosos{n+1}(\Instr,\sigma,u_0,f_0)$
      with $u_0=0$ and $f_0=-\floor{(\rho-\rho')n/2}$.
      For any $j\in\ii{1,n+1}$ and $\delta>0$,
      \begin{align*}
        \P\Biggl(\rt{\m}{j} \geq  \frac{(\rho-\rho')jn - \rho j^2}{2}  -\delta n^2\Biggr)
        \geq 1- Ce^{-cn}
      \end{align*}
      for constants $c,C>0$ that depend only on $\delta$.
    \end{lemma}
    
    \begin{proof}
      Here we allow the constants for overwhelming probability to depend on $\delta$.
      Let $Z_i=\sum_{v=1}^i\sigma(v)$, the number of particles initially placed in
      $\ii{1,i}$.
      By \thref{prop:min.odometer.concentration},
      \begin{align}\label{eq:modcb}
        \abs[\Bigg]{\rt{\m}{j} -  \sum_{i=1}^j (-f_0 - Z_i) }
          &\leq \delta n^2/2\text{ w.o.p.}
      \end{align}
      Thus our task is to prove
      \begin{align}\label{eq:focus.bound}
         \sum_{i=1}^j (-f_0 - Z_i) \geq
             \frac{(\rho-\rho')jn-\rho j^2}{2} - \delta n^2/2\text{ w.o.p.}
      \end{align}
      For $1\leq i\leq n$, we have
      $Z_i\sim \Bin(\lceil \rho n\rceil ,\, i / n )$ for $1\leq i\leq n$,
      and $Z_{n+1}=Z_n$. Thus $\E Z_i=\rho i + O(1)$ for all $i\in\ii{1,n+1}$, and
      by a Chernoff bound for each $i$ we have
      $Z_i\leq\rho i+ \delta n /5$ w.o.p.
      By a union bound, $\abs[\big]{\sum_{i=1}^j Z_i - \rho j^2/2}\leq \delta n^2/4$ w.o.p.
      Using these bounds together with $f_0=-(\rho-\rho')n/2+O(1)$, we have proven \eqref{eq:focus.bound}.
      And \eqref{eq:modcb} and \eqref{eq:focus.bound} together prove the lemma.
    \end{proof}

    We are ready for the proof of \thref{thm:lower.bound} now. The idea is to consider
    the class of extended stable odometers on $\ii{0,n+1}$ executing zero instructions at site~$0$
    (which is the sink) and executing exactly $\floor{\epsilon n/2}$
    \Left\ instructions at site~$1$, our desired upper bound there. We use layer percolation
    to produce an extended stable odometer in this class that grows as rapidly as possible.
    We then use this growth to prove that the extended stable odometer takes nonnegative values
    and hence bounds the stabilizing odometer via the least-action principle.
    \begin{proof}[Proof of \thref{thm:lower.bound}]
      We allow the constants in overwhelming probability bounds to depend on $\epsilon$
      and $\lambda$.
      Let $\rho' = \rho-\epsilon$.
      We will show that the odometer produced by stabilizing $\sigma$
      on $\ii{1,n}$ executes at most $\epsilon n/2$ \Left\ instructions at site~$1$ w.o.p.
      By symmetry the same statement is true for \Right\ instrucions at site~$n$,
      thus proving the proposition.
      
      Let $f_0=-\floor{\epsilon n/2}$ and $u_0=0$,
      and let us abbreviate $\eosos{n+1}(\Instr,\sigma,u_0,f_0)$
      as $\eosos{n+1}$.
      By definition, all extended odometers $u\in\eosos{n+1}$ satisfy
      $\rt{u}{0}-\lt{u}{1}=f_0$ and $u(0)=0$. Hence they
      execute $-f_0=\floor{\epsilon n/2}$ \Left\ instructions at site~$1$.
      Also by definition, they are stable on $\ii{1,n}$. Thus, if we can produce
      any genuine odometer (i.e., taking nonnegative values) in $\eosos{n+1}$,
      then the least-action principle (\thref{lem:lap}) applies and shows that the true
      odometer stabilizing $\sigma$ on $\ii{1,n}$ ejects at most $\floor{\epsilon n/2}$
      particles from the left endpoint. We show now that it is likely we can construct
      such an odometer.

      Take $\rho''=\rho-\epsilon/2$ and $\rho'''=\rho-\epsilon/4$, so that
      \begin{align*}
        \rho'<\rho''<\rho'''<\rho\leq\crist.
      \end{align*}
      Recall the definitions of the $k$-greedy path and $\crist[k]$ (see the appendix)
      and choose $k=k(\lambda,\epsilon)$ large enough that $\crist[k]\geq \rho'''$.
      Let $(0,0)_0=(r_0,s_0)_0\to\cdots\to(r_{n+1},s_{n+1})_{n+1}$ be the $k$-greedy infection
      path in $\ip{n+1}(\Instr,\sigma,u_0,f_0)$, and let $u$ be an extended odometer
      in $\eosos{n+1}(\Instr,\sigma,u_0,f_0)$ corresponding to it. Recall that under this correspondence,
      $r_j$ counts the additional \Right\ instructions executed by $u$ at site~$j$ beyond
      the minimal odometer (i.e., $\rt{u}{j} = \rt{\m}{j}+r_j$), and $s_j$ counts the number
      of sites in $\ii{1,j}$ where the final instruction executed under $u$ is \Sleep.
      
      Now we demonstrate that $u(j)\geq 0$ for $j\in\ii{0,n+1}$.
      For small $j$, this will hold by an application of \thref{lem:nonnegative}.
      The idea is that $u$ must execute a positive number of \Left\ instructions
      at sites near $0$ to create a leftward flow of particles
      so that the requisite quantity $-f_0$ finish at $0$. For larger $j$, we can deduce $u(j)\geq 0$
      from lower bounds on $\rt{\m}{j}$ and $r_j$ given by \thref{lem:min.bounds,prop:greedy.path}.
      
      We start with the case of small $j$. Let $\alpha=\min(\epsilon/3\rho,1)$.
      We will argue that $u(j)\geq 0$ for $0\leq j\leq\alpha n$ w.o.p.
      Let $Z_i=\sum_{v=1}^i\sigma(v)$, the number of particles initially in
      $\ii{1,i}$.
      We claim that       
      \begin{align}
        Z_{\floor{\alpha n}} \leq (1.1)\rho\alpha n\text{ w.o.p.}\label{eq:Z.lower}
      \end{align}
      This statement follows by applying
      Hoeffding's inequality to $Z_{\floor{\alpha n}}$,
      which has distribution $\Bin(\ceil{\rho n},\floor{\alpha n}/n)$,
      to obtain
      \begin{align*}
        \P\bigl(Z_{\floor{\alpha n}} > (1.1)\rho\alpha n\bigr)
          \leq \exp\bigl(-c(\rho\alpha n)^2/\rho n\bigr) =\exp(-c \epsilon^2n/9\rho)
      \end{align*}
      for some absolute constant $c$. This proves \eqref{eq:Z.lower}
      since $\rho\leq 1$.
      
      For $i\geq 1$ let
      \begin{align}\label{eq:f_i}
        f_i:=\rt{u}{i}-\lt{u}{i+1}
      \end{align}
      be the flow of particles from $i$ to $i+1$ as in
      \thref{lem:flow,lem:nonnegative}, and note that \eqref{eq:f_i} holds for $i=0$
      as well by definition of $\eosos{n+1}$.
      For $0\leq j\leq\alpha n$, we have $Z_j\leq Z_{\floor{\alpha n}}$.
      By the stability of $u$ on $\ii{1,n}$, we have
      $f_j=f_0+Z_j-s_j\leq f_0+Z_j$ by \thref{lem:flow}.
      By \eqref{eq:Z.lower} and our choice of $\alpha$, we obtain $f_j\leq f_0+Z_j<0$ for 
      $0\leq j\leq\alpha n$ w.o.p. Since $u(0)=0$, repeated application
      of \thref{lem:nonnegative}\ref{i:nonnegative.b} proves that $u(j)\geq 0$ for 
      $0\leq j\leq\alpha n$ w.o.p.
      
      Next, we consider $j\geq\alpha n$.
      Since we chose $\crist[k]\geq\rho'''=\rho''+\epsilon/4$, we can apply \thref{prop:greedy.path}
      with $t=\epsilon\sqrt{j}/8$ to show that 
      \begin{align*}
        \P\biggl( r_j < \frac{\rho'' j^2}{2} \biggr)
          &\leq \P\Biggl( r_j<\frac{\bigl(\crist[k] - \epsilon/4\bigr)j^2}{2}\Biggr)
          \leq C\exp\biggl(-\frac{c\epsilon^2 j / 64}{1+\epsilon/8}\biggr)
      \end{align*}
      for constants $c,C>0$ depending only on $\lambda$ and $k=k(\lambda,\epsilon)$.
      Since $\alpha$ is bounded away from zero by a quantity depending only on $\epsilon$,
      we have $r_j\geq \rho'' j^2/2$ w.o.p.\ for $j\geq \alpha n$.
      Combining this bound with \thref{lem:min.bounds}, for all $\alpha n\leq j \leq n+1$ and any fixed $\delta>0$
      it holds with overwhelming probability that
      \begin{align*}
        \rt{u}{j} =r_j+\rt{\m}{j} 
          &\geq \frac{\rho'' j^2}{2} + \frac{(\rho-\rho')jn-\rho j^2}{2}-\delta n^2\\
            &= \tfrac12 j\bigl((\rho-\rho')n-(\rho-\rho'')j\bigr)-\delta n^2\\
              &\geq \tfrac12 j\bigl((\rho-\rho')n-(\rho-\rho'')(n+1)\bigr)-\delta n^2\\
              &= \tfrac12 jn\bigl( \rho'' -\rho' -\tfrac{1}{n}(\rho-\rho'')\bigr)-\delta n^2.
      \end{align*}
      Now, choose a large enough constant $n_0=n_0(\epsilon)$ and assume $n\geq n_0$
      to make $\rho''-\rho'-\frac1n(\rho-\rho'')$ bounded from below, say by $\epsilon/4$.
      Applying the bound $j\geq\alpha n$ and choosing $\delta=\delta(\epsilon)$
      small enough, we obtain $\rt{u}{j}>0 $ for all $\alpha n\leq j\leq n+1$ w.o.p.,
      showing that $u(j)>0$ for $\alpha n\leq j\leq n+1$ w.o.p.
      Together with the previous case, this shows that $u(j)\geq 0$ for all $j\in\ii{0,n+1}$
      w.o.p.
      
      Thus, we have shown that with overwhelming
      probability there exists an odometer $u$ on $\ii{0,n+1}$ that is stable on $\ii{1,n}$
      and satisfies $\lt{u}{1}=\floor{\epsilon n/2}$.
      Let $s$ be the true odometer stabilizing $\ii{1,n}$.
      By \thref{lem:lap}, we have $\lt{s}{1}\leq\floor{\epsilon n/2}$ w.o.p.
      And by symmetry, $\rt{s}{n}\leq\floor{\epsilon n/2}$ w.o.p.
    \end{proof}

\section*{Acknowledgments}
We thank Joshua Meisel for pointing out that it is simpler to start layer percolation
at the left sink rather than the left endpoint.
Hoffman was partially supported by NSF DMS Awards 1954059 and 2503778.
Johnson was partially supported by NSF DMS Award 2503779 and gratefully acknowledges support from
Uppsala University and the Wenner--Gren Foundation. 
Junge was partially supported by NSF DMS Award 2238272. 

\appendix

\section{Selected notation and definitions from \texorpdfstring{\cite{hoffman2024density}}{[HJJ24]}}
\label{sec:notation}

Below are excerpts from \cite[Sections~2--4]{hoffman2024density}. 

\begin{description}
    \item[$\Instr$] Assign each site $v$ a list of \emph{instructions} 
consisting of the symbols \Left, \Right, and \Sleep. We write $\Instr_v(k)$ to denote the $k$th instruction
at site~$v$, and we take $\bigl(\Instr_v(k);\ v \in \mathbb Z,\, k\geq 1\bigr)$ to be i.i.d.\ with
\begin{align*}
  \Instr_v(k)=\begin{cases} \Left&\text{with probability $\frac{1/2}{1+\lambda}$,}\\
    \Right&\text{with probability $\frac{1/2}{1+\lambda}$,}\\
    \Sleep&\text{with probability $\frac{\lambda}{1+\lambda}$.}
  \end{cases}
\end{align*}
    \item[Odometer] An odometer $u$ counts how many instructions $u(v)$ are executed at each site $v$.
      The number of \Right\ and \Left\ instructions used at $v$ is denoted, respectively,
      as $\rt{u}{v}$ and $\lt{u}{v}$. That is, $\rt{u}{v}$ and $\lt{u}{v}$ give the number of \Right\ and
      \Left\ instructions, respectively, among $\Instr_v(1),\ldots,\Instr_v(u(v))$.
      We view all odometers formally as nonnegative functions on $\ZZ$, and we say that $u$ is
      an odometer on an interval $\ii{a,b}$ to mean that it takes the value zero off $\ii{a,b}$.
    \item[The true odometer stabilizing a finite set of sites $V$] Given $\Instr$ and $\sigma$, 
      this is the odometer after ARW executes on $V$ with particles trapped on exiting $V$.
    \item[Abelian property]  All sequences of topplings within a finite set $V$ that stabilize $V$ have the same odometer.
    \item[Stable odometer] Let $u$ be an odometer on $\ZZ$ and 
  let $\sigma$ be an ARW configuration with no sleeping particles.
  We call $u$ \emph{stable on $V \subseteq \ZZ$} for the initial configuration $\sigma$ and 
  instructions $\bigl(\Instr_v(i),\,v\in \ZZ,\,i\geq 1\bigr)$ if for all $v\in V$,
  \begin{enumerate}[(a)]
    \item $h(v) := \sigma(v)+\rt{u}{v-1} + \lt{u}{v+1} - \lt{u}{v} - \rt{u}{v} \in\{0,1\}$;\label{i:balance}
    \item $h(v)=1$ if and only if $\Instr_v(u(v))=\Sleep$.
      \label{i:end.in.sleep}
  \end{enumerate}
  \thref{lem:lap} states that the true odometer stabilizing a finite set of sites $V$ 
  is the minimal odometer on $\ZZ$ stable on $V$.
    \item[Extended odometer]  We extend each stack of instructions to be two-sided by making
      defining $\Instr_v(i)$ for $i < 0$ as i.i.d.\ instructions
      and fixing $\Instr_v(0)=\Left$ for $v\in\mathbb Z$.
Extended odometers are maps $\ZZ\to\ZZ$, with negative values representing 
the execution of instructions on the negative-index
portion of the instruction list, but with the reverse of their normal effects (e.g., a \Left\ instruction
executed at site $v$ pulls a particle from $v-1$ to $v$ rather than pushing one from $v$ to $v-1$).
The point of this extension is to set up the correspondence between odometers and layer percolation
(to be described), but its details are unimportant for this paper.

For the interested reader, 
the counts $\lt{u}{v}$ and $\rt{u}{v}$  are defined
for an extended odometer $u$ with $u(v)<0$ as the negative of the number of \Left\ and \Right\ instructions
at indices $v+1,\ldots,0$. The definition of \emph{stable odometer} is extended verbatim
to extended odometers.
The effect of these extensions is to make it so that for any $k\in\ZZ$, the set
of indices $i$ with $\lt{v}{i}=k$ is some interval $\ii{a,b}$,
and the instructions $(\Instr_v(a),\ldots,\Instr_v(b))$ has the same distribution
for any $k$. We set $\Instr_v(0)=\Left$ to avoid a size-biasing issue that would distinguish
the case $k=0$.

\item[$\eosos{n}(\Instr,\sigma,u_0,f_0)$]
  The set $\eosos{n}(\Instr,\sigma,u_0,f_0)$ consists of all extended odometers $u$ on $\ii{0,n}$ 
  stable on $\ii{1,n-1}$ for a given set of instructions $\Instr$ and 
  initial configuration $\sigma$ and that satisfy $u(0)=u_0$ and have 
  net flow $f_0$ from site~$0$ to site~$1$, i.e.,
  $\rt{u}{0} - \lt{u}{1}=f_0$. This set of extended odometers will have a correspondence
  with layer percolation.

    \item[Minimal odometer $\m$ of $\eosos{n}(\Instr,\sigma,u_0,f_0)$] The minimal 
      element (pointwise) of the extended odometers $\eosos{n}(\Instr,\sigma,u_0,f_0)$,
      which can be obtained by the following inductive procedure.
First let $\m(0)=u_0$. Now suppose that $\m(v-1)$ has already been defined. Then
$\m(v)$ is the minimum integer such that $\lt{\m}{v} = \rt{\m}{v-1} - f_0 - \sum_{i=1}^{v-1}\abs{\sigma(i)}$.
\emph{Minimal odometer} is a slight misnomer, in that $\m$ is in general
only an extended odometer and may take negative values.

We take a moment to distinguish the minimal odometer of $\eosos{n}(\Instr,\sigma,u_0,f_0)$
from the true stabilizing odometer of $\ii{1,n-1}$.
The former is the minimal element of $\eosos{n}(\Instr,\sigma,u_0,f_0)$,
a set of extended odometers stable on $\ii{1,n-1}$
with a condition on the left boundary. The latter is 
by the least-action principal (\thref{lem:lap}) the minimal element of the stable odometers
on $\ii{1,n-1}$, a set which has no boundary constraint
 but contains only non-extended odometers. Thus in general,
neither odometer is bounded by the other. In the typical case we consider in this paper,
the minimal odometer of $\eosos{n}(\Instr,\sigma,u_0,f_0)$ is greater than the true stabilizing
odometer near the left endpoint but is below it elsewhere and takes negative values at sufficiently
large sites.

    \item[Layer percolation] A sequence $(\zeta_k)_{k\geq 0}$ of subsets of $\NN^2$.
We think of a point $(r,s)_k\in\zeta_k$ as a \emph{cell} in column~$r$ and row~$s$ 
at step~$k$ of layer percolation that has been \emph{infected}.
At each step, every cell infects cells in the next step at random
as described in \cite[Section~3.2]{hoffman2024density}. 
We take $\zeta_0=\{(0,0)_0\}$
and then let $\zeta_{k+1}$
consist of all cells infected by some cell in $\zeta_k$.
An \emph{infection path} is a (typically finite) sequence of cells, 
each of which infects its successor.
\item[$\ip{n}(\Instr,\sigma,u_0,f_0)$]
Layer percolation can be coupled with ARW in a natural way by constructing it
from $\Instr$, $\sigma$, $u_0$, and $f_0$.
The set $\ip{n}(\Instr,\sigma,u_0,f_0)$ consists of all infection paths of length~$n$ starting from
$(0,0)_0$ in this coupled realization of layer percolation.
\item[The correspondence between odometers and layer percolation] 
Loosely speaking, extended odometers in $\eosos{n}(\Instr,\sigma,u_0,f_0)$
correspond to infection paths in $\ip{n}(\Instr,\sigma,u_0,f_0)$.
The ending row the infection path---i.e., $s$ where $(r,s)_n$ is the final cell
of the infection path---corresponds to the number of particles left sleeping by
the extended odometer on $\ii{1,n}$.
      
Formally, the correspondence is given by \cite[Proposition~4.6]{hoffman2024density}
and states that there is a surjective map $\Phi\colon \eosos{n}(\Instr,\sigma,u_0,f_0)\to\ip{n}(\Instr,\sigma,u_0,f_0)$ taking
extended stable odometers to infection paths,
with $\Phi(u)$ for $u\in\eosos{n}(\Instr,\sigma,u_0,f_0)$ defined as the sequence of cells
 $\bigl((r_v,s_v)_v,\,0\leq v\leq n\bigr)$ given by
\begin{align*}
  r_v &= \rt{u}{v} - \rt{\m}{v},\\
  s_v &= \sum_{i=1}^v\1\{\Instr_i(u(i))=\Sleep \}.
\end{align*}
Here $\m$ is the minimal odometer for $\eosos{n}(\Instr,\sigma,u_0,f_0)$.
The map $\Phi$ becomes injective if we consider strings of consecutive \Sleep\ instructions
to be the same, i.e., we identify elements $u,u'\in\eosos{n}(\Instr,\sigma,u_0,f_0)$
if for all $v\in\ii{0,n}$, either $u(v)=u'(v)$ or $u(v)$ and $u'(v)$
are indices in a string of consecutive \Sleep\ instructions.
     \item[The $k$-greedy path] This is a sequence of cells $(r_0,s_0)_0,\,(r_1,s_1)_1,\ldots$
 defined by the following inductive procedure:
 Starting from $(r_0,s_0)_0=(0,0)_0$, choose some cell $(r_k,s_k)_k$ that is infected
 starting from $(r_0,s_0)_0$
 with $s_k$ maximal. Let $(r_0,s_0)_0\to\cdots\to(r_k,s_k)_k$ be any infection path leading to $(r_k,s_k)_k$.
 Then choose $(r_{2k},s_{2k})_{2k}$ to be some cell infected starting from
 $(r_k,s_k)_k$ with $s_{2k}$ maximal,
 and take $(r_k,s_k)_k\to\cdots\to(r_{2k},s_{2k})_{2k}$ to be any infection path
 from $(r_k,s_k)_k$ to $(r_{2k},s_{2k})_{2k}$, and so on.
 The choice of $(r_{jk},s_{jk})_{jk}$ in each step of the process and the choice of infection path
 $(r_{(j-1)k},s_{(j-1)k})_{(j-1)k}\to\cdots\to(r_{jk},s_{jk})_{jk}$ is not important to us, 
 so long as it only depends on information up to step~$jk$ of layer percolation. The quantity $\crist^{(k)}$ is the expected increase in the $s$-coordinate after each step in the $k$-greedy path.
\item[The critical density $\crist$] 
  Defined as
  $\crist=\limsup_{k\to\infty}\crist[k]=\limsup_{k\to\infty}\frac1k\E X_k$,
  where $X_k$ is the highest row infected at step~$k$ of layer percolation starting from $(0,0)_0$.
\end{description}

\section{Selected results from \texorpdfstring{\cite{hoffman2024density}}{[HJJ24]}}

These are restatements of the indicated results from \cite{hoffman2024density}.

\begin{thm}[Theorem 8.4]\thlabel{peanut butter}
  Consider activated random walk with sleep rate $\lambda>0$.
  Let $\sigma$ be an initial configuration on $\ii{1,n}$ with no sleeping particles.
  Let $Y_n$ be the number of particles left sleeping on $\ii{1,n}$ in the stabilization of 
  $\sigma$ on
  $\ii{1,n}$. For any $\rho>\crist(\lambda)$,
  \begin{align*}
    \P(Y_n\geq \rho n) \leq Ce^{-cn}
  \end{align*}
  where $C,c$ are positive constants depending on $\lambda$ and $\rho$ but not on $n$ or $\sigma$.
\end{thm}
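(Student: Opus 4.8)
The plan is to transfer the question to layer percolation and then bound from above the rate at which layer percolation grows in the row direction. By the least-action principle, the true odometer stabilizing $\ii{1,n}$ is the minimal stable odometer on $\ii{1,n}$, and under the correspondence between stable odometers and infection paths the number of sleeping particles it leaves behind is precisely the terminal row $s_n$ of its infection path. Since that path is one particular length-$n$ infection path started at $(0,0)_0$, its terminal cell is infected at step~$n$, so $Y_n\le X_n$, where $X_n$ is the highest row infected at step~$n$ of layer percolation. The one point requiring care here is uniformity in $\sigma$: the initial configuration enters layer percolation only through the minimal odometer, which translates columns without altering the instruction-driven mechanism that governs row growth, so one can couple all the $\sigma$-dependent layer percolations to a single $\sigma$-free one so that $Y_n(\sigma)\le X_n$ simultaneously for every $\sigma$. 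This is what will make the constants independent of $\sigma$.

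It then suffices to prove $\P(X_n\ge\rho n)\le Ce^{-cn}$ for $\rho>\crist$. Fix $\epsilon\in(0,\rho-\crist)$. Since $\crist=\limsup_k\crist[k]=\limsup_k\frac1k\E X_k$, we may pick $k$ large enough that $\E X_k\le k(\crist+\epsilon/2)$. Run layer percolation in consecutive blocks of $k$ steps and let $\Delta_i$ denote the increase of the highest infected row over the $i$th block, so $X_n\le\sum_{i=1}^{\lceil n/k\rceil}\Delta_i$. Conditioned on the first $i-1$ blocks, $\Delta_i$ is the largest row-increase achievable over $k$ layer percolation steps starting from the infected cells present at that time; since the top row of the infected set at any time is the current value of $X$, and since the extra rows reached over a single step from a given cell are governed by instruction counts with geometric tails, one shows (this is the $k$-greedy heuristic made quantitative) that $\Delta_i$ is stochastically dominated, uniformly over the past, by a random variable $V$ with $\E V\le k(\crist+\epsilon/2)+o(k)$ and $\E e^{tV}<\infty$ for some $t=t(\lambda,k)>0$; a negligible $O(\log n)$-per-block correction absorbs the maximum over columns.

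Given this, $X_n$ is bounded by a sum of $\lceil n/k\rceil$ conditionally $V$-dominated increments whose per-step mean rate is at most $\crist+\epsilon/2+o(1)<\rho$, so a Chernoff bound for such sums, using the finite exponential moment of $V$, yields $\P\bigl(X_n\ge\rho n\bigr)\le Ce^{-cn}$ with $c,C$ depending only on $\lambda$ and $\rho$ (via $\epsilon$ and $k$). Combined with $Y_n\le X_n$ and the $\sigma$-uniform coupling above, this is exactly the claim.

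I expect the main obstacle to be the quantitative control of the per-block row growth of layer percolation: both establishing the exponential tail of $V$ and, more delicately, obtaining the domination \emph{uniformly over the starting cell and over $\sigma$}, i.e., showing that neither a high starting row, nor a large column index, nor a dense initial configuration lets layer percolation outrun rate $\crist$. The reduction to a single $\sigma$-free dominating process and the final Chernoff estimate are comparatively routine once this growth bound is in hand.
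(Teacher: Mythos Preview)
This statement is not proven in the present paper; it appears in the appendix as a restatement of Theorem~8.4 of \cite{hoffman2024density} and is used here as a black box. There is therefore no proof in this paper to compare your proposal against.

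Your outline is aligned with the layer-percolation machinery of \cite{hoffman2024density} and is the natural strategy: embed the true odometer as an infection path so that $Y_n$ is bounded by the top row $X_n$, then show $X_n$ concentrates below $(\crist+\epsilon)n$ via a block argument exploiting $\crist=\limsup_k k^{-1}\E X_k$. You have also correctly located the pressure points. Two of them deserve sharper comment.

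First, the reduction $Y_n\le X_n$ is more delicate than your first paragraph lets on. The correspondence $\Phi$ maps $\eosos{n}(\Instr,\sigma,u_0,f_0)$ to $\ip{n}(\Instr,\sigma,u_0,f_0)$ for a \emph{fixed} pair $(u_0,f_0)$, whereas for the true stabilizing odometer the net left-flux $f_0=-\lt{u}{1}$ is itself random and not known a~priori. One therefore needs either a union bound over the $O(n^2)$ relevant values of $f_0$ (harmless against an exponential target), or a lemma that the row-growth law of layer percolation is genuinely independent of $(\sigma,u_0,f_0)$. You assert the latter, and it is the right statement, but it is a lemma rather than something immediate from the definitions reproduced here.

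Second, your ``negligible $O(\log n)$-per-block correction'' for the maximum over columns is not negligible as written: with $n/k$ blocks and $k$ fixed, it accumulates to order $(n/k)\log n$, which is not $o(n)$. Something stronger is required---either a genuine conditional subadditivity $X_{m+k}\le X_m+\tilde X_k$ with $\tilde X_k$ distributed as $X_k$ given the past (no log correction), or a monotonicity/coupling property of layer percolation ensuring that the row growth from the entire top row coincides with the row growth from a single representative cell rather than a maximum of many. You already flag ``uniformly over the starting cell'' as the main obstacle; this is exactly where that obstacle bites, and your sketch does not yet resolve it.
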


\begin{prop}[Proposition 8.5] \thlabel{prop:driven.dissipative.lower}
  Let $S_n$ be distributed as the number of sleeping particles under the invariant
  distribution of the driven-dissipative Markov chain on $\ii{1,n}$.
  For any $\rho<\crist(\lambda)$,
  \begin{align*}
    \P( S_n\leq\rho n) \leq Ce^{-cn}
  \end{align*}
  for constants $c,C$ depending only on $\lambda$ and $\rho$.
\end{prop}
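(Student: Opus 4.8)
\textit{Proof proposal.} The plan is to reduce this statement about the invariant distribution of the driven-dissipative chain to one about stabilizing an explicit random configuration, and then to run essentially the same layer-percolation construction used in the proof of \thref{lem:construct}.

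For the reduction: by the abelian property, running the driven-dissipative chain on $\ii{1,n}$ from the empty configuration for $M$ steps yields the same law as stabilizing on $\ii{1,n}$ the configuration $\sigma_M$ consisting of $M$ active particles placed independently and uniformly at random in $\ii{1,n}$. Write $S_n^{(M)}$ for the number of sleeping particles in that stabilization. Since the chain converges to its unique invariant distribution and all these quantities are integer-valued, $\P(S_n\le\rho n)=\lim_{M\to\infty}\P(S_n^{(M)}\le\rho n)$, so it suffices to bound $\P(S_n^{(M)}\le\rho n)$, for all $M$ large relative to $n$, by $Ce^{-cn}$ with $c,C$ depending only on $\lambda$ and $\rho$. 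Because $\sigma_M$ has exactly $M$ particles and the stabilized configuration on $\ii{1,n}$ is stable, $M=S_n^{(M)}+\lt{s_M}{1}+\rt{s_M}{n}$, where $s_M$ is the true odometer stabilizing $\sigma_M$ and $\lt{s_M}{1},\rt{s_M}{n}$ count the particles ejected at the two sinks. Hence $\{S_n^{(M)}\le\rho n\}\subseteq\{\lt{s_M}{1}\ge(M-\rho n)/2\}\cup\{\rt{s_M}{n}\ge(M-\rho n)/2\}$, and since the law of $\sigma_M$ is symmetric under $j\mapsto n+1-j$, it is enough to bound $\P\bigl(\lt{s_M}{1}\ge(M-\rho n)/2\bigr)$.

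To do this I would build a bounding odometer mirroring the proof of \thref{lem:construct}, taking $\sigma_M$ in place of $\sigma$, $u_0=0$, and $f_0=-\floor{(M-\rho n)/2}$, and working inside $\eosos{n+1}(\Instr,\sigma_M,u_0,f_0)$. Fix $\rho<\rho''<\rho'''<\crist$ and $k$ with $\crist[k]\ge\rho'''$ as in that proof, run the $k$-greedy infection path in $\ip{n+1}(\Instr,\sigma_M,u_0,f_0)$, and let $u$ be a corresponding extended stable odometer: it is stable on $\ii{1,n}$ and executes $-f_0$ \Left\ instructions at site~$1$. The greedy-path estimates (\thref{prop:greedy.path}, using $\crist[k]\ge\rho'''$) give $r_j\gtrsim\rho'' j^2/2$ with overwhelming probability, and combining this with a concentration estimate for the minimal odometer $\m$ of the form of \thref{lem:min.bounds} --- here $\rt{\m}{j}\approx\sum_{i\le j}(|f_0|-Z_i)$ with $Z_i\sim\Bin(M,i/n)$ --- lets one verify the lower bound $u(j)\ge r_j+\rt{\m}{j}\ge 0$ for $j$ in the bulk of $\ii{0,n+1}$, the sites near site~$1$ being handled exactly as in \thref{lem:construct} via \thref{lem:nonnegative} and the fact that fewer than $|f_0|$ particles land near site~$1$ with overwhelming probability. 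Once $u\ge 0$, the least-action principle (\thref{lem:lap}) gives $\lt{s_M}{1}\le\lt{u}{1}=\floor{(M-\rho n)/2}$ with overwhelming probability, and the symmetric construction at site~$n$ then yields $\P(S_n^{(M)}\le\rho n)\le Ce^{-cn}$.

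The step I expect to be the main obstacle is making this quantitative bound compatible with the reduction. Recovering the invariant distribution forces $M\to\infty$ with $n$ fixed, so $|f_0|\asymp M$ grows and one needs the greedy-path and minimal-odometer concentration estimates, and the endpoint nonnegativity argument, to hold with constants uniform in $M$ (not only in $n$) at a scale where the minimal odometer can be far larger than $n^2$; the relevant results of \cite{hoffman2024density} are stated uniformly in the initial configuration (compare \thref{peanut butter}), but confirming that they apply at this scale needs care. An alternative keeping $M$ of order $n$ would run the construction at a fixed density $\rho^\dagger>1$ and transfer the bound to $S_n$ via an exponentially accurate coupling with the stabilization of $\sigma_{\lceil\rho^\dagger n\rceil}$, in the spirit of \cite[Theorem~4]{levine2021exact}; but producing such a coupling, rather than the $o(1)$ total-variation bound currently available, would itself be a significant task.
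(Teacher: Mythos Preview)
Your reduction via $M\to\infty$ has a structural gap, not merely a uniformity-of-constants issue. Trace your own construction at the right endpoint: with $|f_0|\approx M/2$ and $Z_i\sim\Bin(M,i/n)$, the minimal odometer satisfies
\[
  \rt{\m}{n+1}\;\approx\;\sum_{i=1}^{n+1}\bigl(|f_0|-Z_i\bigr)\;\approx\;-M-\tfrac{\rho n^2}{2},
\]
while the greedy path contributes only $r_{n+1}\approx\crist[k](n+1)^2/2\le n^2/2$. Hence $r_{n+1}+\rt{\m}{n+1}$ is negative as soon as $M$ exceeds a constant times $n^2$, so the extended odometer $u$ fails to be nonnegative at site $n+1$ and the least-action principle cannot be invoked. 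Since the invariant-distribution limit requires $M\to\infty$ with $n$ fixed, this is fatal. (There is also a secondary issue: the fluctuations of $\sum_{i\le j}Z_i$ are of order $n\sqrt{M}$, which swamps the $n^2$ scale of $r_j$ once $M\gg n^2$, so even on $\ii{1,n}$ the concentration step breaks down.)

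The proof in \cite{hoffman2024density} sidesteps all of this by never sending $M\to\infty$. As noted just before \thref{lem:construct} in this paper, Proposition~8.5 is proved there for the \emph{deterministic} initial configuration of one particle per site; the link to the invariant distribution is an exact-sampling statement (stabilizing any configuration with every site occupied yields precisely the stationary law of the driven-dissipative chain), so one may take $\sigma\equiv\mathbf{1}$ and $M=n$. With $Z_i=i$ deterministic and $|f_0|$ of order $n$, the layer-percolation construction of a stable odometer on $\ii{1,n}$ goes through cleanly, with no endpoint negativity and no $M$-dependence in the constants. Your ``alternative'' of working at a fixed $\rho^\dagger>1$ is pointed in the right direction, but the missing ingredient is exactly this exact-sampling reduction rather than an improved TV bound.
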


\begin{prop}[Proposition 5.8]\thlabel{prop:min.odometer.concentration}
 Let  
  $\m$ be the minimal odometer of $\eosos{n}(\Instr,\sigma,u_0,f_0)$.
  Let
  \begin{align*}
    e_i= -f_0-\sum_{v=1}^i\abs{\sigma(v)}
  \end{align*}
  and suppose that $\abs{e_i}\leq\emax$ for some $\emax\geq 1$.
  For some constants $c,C>0$ depending only on $\lambda$, it holds
  for all $t\geq 4\emax$ that
  \begin{align}\label{eq:min.odometer.bound}
    \P\Biggl(\abs[\bigg]{\rt{\m}{j}-\biggl(\frac{u_0}{2(1+\lambda)}+\sum_{i=1}^je_i\biggr)} \geq t\Biggr)
      \leq C\exp\biggl(-\frac{ct^2}{n\bigl(n\emax + u_0+t\bigr)}\biggr)
  \end{align}
  for all $1\leq j\leq n$.
\end{prop}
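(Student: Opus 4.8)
The plan is to recognize $\rt{\m}{j}$ as a martingale with an explicit deterministic drift and conditionally sub-exponential increments, and then to combine a Chernoff/exponential-supermartingale bound with a stopping-time localization that copes with the fact that the increment variances grow with the running deviation. First I would unwind the recursion defining $\m$. For $v\ge1$ set $Z_{v-1}:=\sum_{i=1}^{v-1}\abs{\sigma(i)}$ and $b_v:=\lt{\m}{v}=\rt{\m}{v-1}-f_0-Z_{v-1}=\rt{\m}{v-1}+e_{v-1}$; by definition $\m(v)$ is the position of the $b_v$-th \Left\ instruction in the two-sided stack at site~$v$. Hence, conditionally on the instruction stacks at sites $\le v-1$ (call this information $\Fscr_{v-1}$), which determines $b_v$ and $\rt{\m}{v-1}$, the increment $\rt{\m}{v}-\rt{\m}{v-1}$ equals $\sgn(b_v)\sum_{k=1}^{\abs{b_v}}G_k-\rt{\m}{v-1}$, where the $G_k$ are i.i.d.\ ``gap'' variables counting the \Right\ instructions between two consecutive \Left\ instructions in a fresh stack and are independent of $\Fscr_{v-1}$. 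Site~$0$ contributes separately, with $\rt{\m}{0}\sim\Bin\bigl(u_0,\tfrac{1/2}{1+\lambda}\bigr)$.

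Next I would read off the drift and the tails. Restricting a stack to its \Left\ and \Right\ entries gives a sequence of fair coin flips, so $G$ is distributed as the number of heads before the first tail: $\E G=1$, $\Var G$ is a constant $\kappa$, and $G$ has geometric tails, hence is sub-exponential with $O(1)$ scale. Thus $\E[\rt{\m}{v}-\rt{\m}{v-1}\mid\Fscr_{v-1}]=\sgn(b_v)\abs{b_v}-\rt{\m}{v-1}=e_{v-1}$, so $S_v:=\rt{\m}{v}-\tfrac{u_0}{2(1+\lambda)}-\sum_{i=0}^{v-1}e_i$ is a martingale (with $S_0=\rt{\m}{0}-\tfrac{u_0}{2(1+\lambda)}$ a centered sum of $u_0$ Bernoullis), and for $v\ge1$ its increment $D_v$ equals $\sgn(b_v)\sum_{k=1}^{\abs{b_v}}(G_k-1)$, conditionally sub-exponential with $O(1)$ scale and conditional variance $\kappa\abs{b_v}$. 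Since $\rt{\m}{j}-S_j=\tfrac{u_0}{2(1+\lambda)}+\sum_{i=0}^{j-1}e_i$ differs from the centering in the statement by $e_0-e_j$, whose modulus is at most $2\emax\le t/2$ by the hypotheses $\abs{e_i}\le\emax$ and $t\ge4\emax$, it suffices to prove $\P(\abs{S_j}\ge t/2)\le C\exp\bigl(-ct^2/(n(n\emax+u_0+t))\bigr)$.

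The main step, which I would carry out next, is this martingale bound, and the point that forces the extra $t$ in the denominator is that $\kappa\abs{b_v}\le\kappa\bigl(\abs{\E\rt{\m}{v-1}}+\emax+\abs{S_{v-1}}\bigr)\le\kappa\bigl(u_0+n\emax+\emax+\abs{S_{v-1}}\bigr)$, so the conditional variance at step $v$ is controlled only once $\abs{S_{v-1}}$ is. Let $\tau:=\inf\{v:\abs{S_v}\ge t/2\}$. For every $v\le\tau$ we have $\abs{S_{v-1}}<t/2$, so $\abs{b_v}\le u_0+n\emax+\emax+t/2$, and hence (using $1\le j\le n$ and absorbing the $O(u_0)$ variance of $S_0$) the predictable quadratic variation satisfies $\langle S\rangle_{j\wedge\tau}\le V_0:=Cn(n\emax+u_0+t)$ deterministically. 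Conditional sub-exponentiality makes $\exp\bigl(\theta S_{v\wedge\tau}-C'\theta^2\langle S\rangle_{v\wedge\tau}\bigr)$ a supermartingale with mean at most $1$ for $\abs\theta\le c_0$, so $\E e^{\theta S_{j\wedge\tau}}\le e^{C'\theta^2 V_0}$; optimizing a Chernoff bound over $\theta\in[0,c_0]$ (the optimizer lies in this range since $V_0\gtrsim nt\ge t$), and doing the same for $-S$, yields $\P(\abs{S_{j\wedge\tau}}\ge t/2)\le C\exp(-ct^2/V_0)=C\exp\bigl(-ct^2/(n(n\emax+u_0+t))\bigr)$. Finally $\{\abs{S_j}\ge t/2\}\subseteq\{\abs{S_{j\wedge\tau}}\ge t/2\}$, since $S_{j\wedge\tau}=S_j$ when $\tau>j$ while $\abs{S_{j\wedge\tau}}=\abs{S_\tau}\ge t/2$ when $\tau\le j$; this gives the desired bound.

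I expect the main obstacle to be exactly this self-bounding structure: the conditional variance of the $v$-th increment is proportional to $\abs{b_v}$ and hence to the running deviation $\abs{S_{v-1}}$, so one cannot extract a deterministic variance bound before localizing at $\tau$, and this is precisely what degrades the estimate from the naive $\exp\bigl(-c\min(t^2/(n(n\emax+u_0)),\,t)\bigr)$ to the stated form with $nt$ in the denominator. The real work lies in choosing the stopping time, verifying the deterministic quadratic-variation bound of order $n(n\emax+u_0+t)$ up to $\tau$, and checking the event inclusion above. A minor additional chore is the bookkeeping for two-sided stacks when $b_v<0$, where one confirms that the increment is still a signed sum of $\abs{b_v}$ i.i.d.\ mean-one sub-exponential gaps with the same $O(1)$ scale.
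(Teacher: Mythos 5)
Your argument is correct, and it follows what must be essentially the paper's approach: the exact form of the bound $C\exp\bigl(-ct^2/(n(n\emax+u_0+t))\bigr)$, with the unusual extra $t$ in the denominator, is a fingerprint of precisely the self-bounding martingale structure and stopping-time localization you identify. You have the right decomposition (unwinding the recursion $\lt{\m}{v}=\rt{\m}{v-1}+e_{v-1}$ into a martingale $S_v$ with increments that are signed sums of $|b_v|$ i.i.d.\ centered geometric gaps, plus the separate $\Bin(u_0,\tfrac{1/2}{1+\lambda})$ contribution from site~$0$), the right shift-of-centering step (the $e_0-e_j$ discrepancy is controlled by $2\emax\le t/2$), the right deterministic bound $|b_v|\lesssim u_0+n\emax+t$ on $\{v\le\tau\}$ giving $\langle S\rangle_{j\wedge\tau}\lesssim n(n\emax+u_0+t)$, and the right event inclusion $\{|S_j|\ge t/2\}\subseteq\{|S_{j\wedge\tau}|\ge t/2\}$. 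The one point you correctly flag as a chore — that for $b_v<0$ the distribution of $\rt{\m}{v}$ is still $\sgn(b_v)\sum_{k=1}^{|b_v|}G_k$ with the same gap law, owing to the reverse-effect convention on the negative-index portion of the two-sided stack — does indeed check out, since restricting either half of the stack to its non-\Sleep\ entries yields fair coin flips in the same way.
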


\begin{lemma}[Lemma~4.1]\thlabel{lem:flow}
  Let $u$ be an extended odometer on $\ii{0,n}$.
  Let $f_v=\rt{u}{v}-\lt{u}{v+1}$, the net flow from $v$ to $v+1$.
  Let $s_v=\sum_{i=1}^v\1\{\instr_i(u(i))=\Sleep\}$.
  Then $u$ is stable on $\ii{1,n-1}$ for initial configuration $\sigma$ if and only if
  \begin{align}\label{eq:lemflow}
    f_v = f_0 + \sum_{i=1}^v\abs{\sigma(i)} - s_v\quad\text{for all $v\in\ii{0,n-1}$.}
  \end{align}
\end{lemma}

  \begin{lemma}[Contrapositive of Lemma 8.2]\thlabel{lem:nonnegative}
    Suppose that $u$ is an extended odometer on $\ii{0,n}$ stable
    on $\ii{1,n-1}$.
    Let $f_v=\rt{u}{v}-\lt{u}{v+1}$, the net flow from $v$ to $v+1$.
    \begin{enumerate}[(a)]
    \item For any $v\in\ii{0,n-1}$, if $u(v+1)\geq 0$ then $u(v) >0$ or $f_v\leq 0$.
    \item For any $v\in\ii{1,n}$, if $u(v-1)\geq 0$, then $u(v)\geq 0$ or $f_{v-1}> 0$.
        \label{i:nonnegative.b}
    \end{enumerate}
    
  \end{lemma}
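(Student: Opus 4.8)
\textbf{Proof proposal for \thref{lem:nonnegative}.}

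The plan is to unwind the definition of stability at a single site and turn it into an inequality relating the odometer value, the flow across an edge, and the presence of a sleeping particle. Recall that $u$ stable on $\ii{1,n-1}$ means that at each such site $v$ the balance quantity $h(v)=\sigma(v)+\rt{u}{v-1}+\lt{u}{v+1}-\lt{u}{v}-\rt{u}{v}$ lies in $\{0,1\}$, and equals $1$ exactly when the last instruction executed at $v$ is \Sleep. The key bookkeeping identity I would establish first is that $f_{v}-f_{v-1}=\rt{u}{v}-\lt{u}{v+1}-\rt{u}{v-1}+\lt{u}{v}=\sigma(v)-h(v)$, so that $f_v=f_{v-1}+\sigma(v)-h(v)$. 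Since $\sigma$ has no sleeping particles, $\sigma(v)\geq 0$, and since $h(v)\in\{0,1\}$, this gives $f_v\geq f_{v-1}-1$ and $f_v\leq f_{v-1}+\sigma(v)$; more to the point, the single-step change in flow is controlled, which is what drives both parts.

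For part (a): suppose $v\in\ii{0,n-1}$, that $u(v+1)\geq 0$, and — arguing by contraposition — that both $u(v)=0$ (it is an odometer, hence nonnegative, so $u(v)\le 0$ forces $u(v)=0$) and $f_v>0$. I want a contradiction. When $u(v)=0$, no instructions are executed at $v$, so $\rt{u}{v}=\lt{u}{v}=0$; meanwhile $f_v=\rt{u}{v}-\lt{u}{v+1}=-\lt{u}{v+1}\le 0$, directly contradicting $f_v>0$. (If $v\ge 1$ one could alternatively route through $h(v)$, but the edge-count identity $f_v=-\lt{u}{v+1}$ when $u(v)=0$ is cleaner and also covers $v=0$.) Hence if $u(v+1)\ge 0$ then $u(v)>0$ or $f_v\le 0$, which is exactly (a).

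For part (b): suppose $v\in\ii{1,n}$, that $u(v-1)\geq 0$, and — again by contraposition — that $u(v)<0$, i.e.\ $u(v)\le -1$, and that $f_{v-1}\le 0$; I want a contradiction. Negative odometer values correspond to executing instructions on the negative portion of the stack with reversed effect, so $u(v)\le -1$ forces $\rt{u}{v}+\lt{u}{v}<0$ in the signed convention; in particular at least one of $\rt{u}{v},\lt{u}{v}$ is negative. The cleanest route is again the balance relation rewritten as a statement about $f_v$ versus $f_{v-1}$: from $f_v=f_{v-1}+\sigma(v)-h(v)$ together with $f_{v-1}\le 0$, $\sigma(v)\ge 0$, and $h(v)\in\{0,1\}$ when $v\le n-1$ (the endpoint $v=n$ needs separate handling since stability is only assumed on $\ii{1,n-1}$, but there $\lt{u}{n+1}=0$ so $f_n=\rt{u}{n}\ge 0$ reduces things), one concludes that the signed left-count $\lt{u}{v}$ must be negative, which combined with $u(v-1)\ge 0$ and the definition of the signed instruction counts yields the contradiction that some \Left\ instruction at $v$ is ``executed negatively'' while $u(v-1)$ already forced enough positive flow. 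I would finish by stating the contrapositive: if $u(v-1)\ge 0$ then $u(v)\ge 0$ or $f_{v-1}>0$.

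I expect the main obstacle to be part (b), specifically making precise the sign conventions for extended odometers at sites where $u(v)<0$ — the statement ``instructions executed with reversed effect'' has to be translated carefully into the signs of $\rt{u}{v}$ and $\lt{u}{v}$ and then fed through the balance equation $h(v)\in\{0,1\}$, which is where the asymmetry between (a) and (b) (strict versus non-strict inequalities, and the direction of the flow) comes from. Everything else is the one-line edge-count identity $f_v-f_{v-1}=\sigma(v)-h(v)$ and case analysis on whether $u(v)$ is zero, positive, or negative.
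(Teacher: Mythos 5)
This lemma is not proved in the present paper; it is cited verbatim (as the contrapositive of Lemma~8.2) from \cite{hoffman2024density}, so there is no in-paper argument to compare against. Evaluating your proposal on its own terms, there are two genuine problems.

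In part~(a) you write that since $u$ is an odometer it is nonnegative, so $u(v)\le 0$ forces $u(v)=0$. That is false in this setting: the lemma is explicitly about \emph{extended} odometers, which are allowed to take negative values (you even rely on this in part~(b)). The underlying idea is still salvageable — when $u(v)\le 0$ the signed count $\rt{u}{v}$ is $\le 0$, and $u(v+1)\ge 0$ gives $\lt{u}{v+1}\ge 0$, so $f_v=\rt{u}{v}-\lt{u}{v+1}\le 0$ — but as written your reduction to $u(v)=0$ is unjustified and should be replaced by this sign argument.

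Part~(b) does not reach a conclusion. The same sign argument only yields $f_{v-1}=\rt{u}{v-1}-\lt{u}{v}\ge 0$, whereas the statement demands the strict inequality $f_{v-1}>0$, and you never close that gap. The step ``$u(v)\le -1$ forces $\rt{u}{v}+\lt{u}{v}<0$'' is asserted, not proved, and it is not obviously true: the instructions on the negative part of the stack can include \Sleep, in which case $\rt{u}{v}=\lt{u}{v}=0$ is possible even with $u(v)<0$. From there you jump to ``one concludes that $\lt{u}{v}$ must be negative'' with no derivation, and the final sentence describes a picture rather than deriving a contradiction. The strictness in (b) — which is precisely the asymmetry with (a) that you flagged as the main obstacle — needs to come from somewhere (a careful accounting of the sign convention for $\rt{\cdot}{v},\lt{\cdot}{v}$ at sites with $u(v)<0$, or from the stability relation $h(v)\in\{0,1\}$, which you introduce via the identity $f_v-f_{v-1}=\sigma(v)-h(v)$ but never actually exploit), and at $v=n$ stability is not even assumed, so that endpoint needs its own treatment beyond the remark you make about $f_n$. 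As it stands, part~(b) is a sketch of where a proof might live, not a proof.
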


 \begin{prop}[Proposition 5.16]\thlabel{prop:greedy.path}
   Let
   $(0,0)_0=(r_0,s_0)_0\to(r_1,s_1)_1\to\cdots$ be the
   $k$-greedy infection path. There exist constants $C,c$
   depending only on $\lambda$ and $k$ such that for all $n$ and all $t\geq 5$,
   \begin{align}
     \P\Biggl(\abs[\bigg]{r_n-\frac{\crist[k]n^2}{2}}\geq tn^{3/2}\Biggr) &\leq C\exp\biggl(-\frac{ct^2}{1+\frac{t}{\sqrt{n}}}\biggr)\label{eq:greedy.path.u},\\\intertext{and}
     \P\biggl(\abs[\Big]{s_n-\crist[k]n}\geq t\sqrt{n}\biggr)&\leq 2e^{-ct^2}.\label{eq:greedy.path.s}
   \end{align}
 
 \end{prop}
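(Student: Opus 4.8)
To prove \thref{prop:greedy.path}, I would work with the $k$-greedy path at the block level. Fix $k$, set $m=\floor{n/k}$, let $\mathcal{H}_j$ be the information generated by layer percolation and by the greedy choices through step $jk$, and write the block increments $a_j=s_{jk}-s_{(j-1)k}$ and $b_j=r_{jk}-r_{(j-1)k}$ for $1\le j\le m$. The argument rests on two structural facts about layer percolation, to be extracted from the \cite{hoffman2024density} construction. First, layer percolation is invariant under column translations and is spatially homogeneous, so the row-increment over any $k$ consecutive steps started from any cell has the law of $X_k$, the top row reached from $(0,0)_0$ at step~$k$; hence $\E[a_j\mid\mathcal{H}_{j-1}]=\E X_k=k\crist[k]$ (this is exactly what makes $\crist[k]$ a well-defined number), and since $s$ rises by at most $1$ per step, $0\le a_j\le k$. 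Second, the column coordinate drifts with the accumulated row: $\E[b_j\mid\mathcal{H}_{j-1}]=k\,s_{(j-1)k}+O(1)$ uniformly over admissible greedy rules, and $b_j$ minus this conditional mean is, conditionally, sub-exponential on a scale $O\bigl(1+s_{(j-1)k}\bigr)$. (Heuristically, $r$ is an odometer surplus that accumulates the sleeping mass $s$, so $dr/dv\approx s$ and its fluctuations scale with $s$.)

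\emph{The bound on $s_n$.} By the first fact, $\sum_{i\le\ell}(a_i-k\crist[k])$ is a martingale with increments bounded by $k$, so Azuma--Hoeffding gives Gaussian concentration of $s_{mk}=\sum_{j\le m}a_j$ about $mk\crist[k]$. Since $s$ changes by at most $1$ per step, $s_n$ differs from $s_{mk}$ by at most $k$, and $mk\crist[k]$ from $\crist[k]n$ by at most $k$; feeding these in yields $\P(|s_n-\crist[k]n|\ge t\sqrt n)\le 2e^{-ct^2}$ for $t\ge 5$, which is \eqref{eq:greedy.path.s}. (Small $n$ and the fractional last block are absorbed by taking $c$ small.)

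\emph{The bound on $r_n$.} Write $\tilde\epsilon_\ell=\sum_{i\le\ell}(a_i-k\crist[k])$, so that $s_{jk}=k\crist[k]j+\tilde\epsilon_j$, and decompose $r_n-\tfrac12\crist[k]n^2=M+D+(\text{remainder})$, where $M=\sum_{j\le m}\bigl(b_j-\E[b_j\mid\mathcal{H}_{j-1}]\bigr)$ and $D=\sum_{j\le m}\E[b_j\mid\mathcal{H}_{j-1}]-\tfrac12\crist[k]n^2$. Using the second fact and $\sum_{j<m}k\crist[k]jk=\tfrac12\crist[k](mk)^2+O(nk)=\tfrac12\crist[k]n^2+O(nk)$, one gets $D=k\sum_{j<m}\tilde\epsilon_j+O(nk)$; summation by parts turns $k\sum_{j<m}\tilde\epsilon_j$ into $k\sum_{i<m}(a_i-k\crist[k])(m-i)$, a weighted sum of bounded martingale differences with weights at most $n$, so Azuma puts it below $\tfrac14 tn^{3/2}$ except with probability $2e^{-ct^2}$; the $O(nk)$ piece together with the at most $k$ trailing steps beyond $mk$ (a single sub-exponential increment of scale $O(n)$) are negligible for $t\ge 5$. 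For $M$, the $j$-th increment has conditional variance $O\bigl((1+s_{(j-1)k})^2\bigr)=O(j^2k^2)$ and conditional sub-exponential scale $O(jk)$, and $s_{(j-1)k}\le n$ deterministically, so the predictable quadratic variation is $V=O(n^3)$ and the increment scale bound is $K=O(n)$. A Bernstein/Freedman martingale inequality then gives $\P(|M|\ge x)\le C\exp\!\bigl(-c\min(x^2/V,\,x/K)\bigr)$; at $x=\tfrac14 tn^{3/2}$ this is $C\exp\!\bigl(-c\min(t^2,t\sqrt n)\bigr)\le C\exp\!\bigl(-ct^2/(1+t/\sqrt n)\bigr)$, using $t^2/(1+t/\sqrt n)\le\min(t^2,t\sqrt n)$. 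A union bound yields \eqref{eq:greedy.path.u}. Note that both the quadratic mean $\tfrac12\crist[k]n^2$ and the Bernstein-rather-than-Gaussian tail come precisely from the second fact.

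\emph{Main obstacle.} The martingale bookkeeping, the summation by parts, and the concentration inequalities are routine. The genuine work is establishing the two structural inputs from the combinatorics of infections between consecutive layers: that the per-step row drift is the same number $\crist[k]$ everywhere along the greedy path (equivalently, that $X_k$-type increments are insensitive to the starting cell and to the greedy rule), and the sharp relation $\E[b_j\mid\mathcal{H}_{j-1}]=k\,s_{(j-1)k}+O(1)$ together with the $s$-proportional sub-exponential tail of $b_j$. Those are the hard steps; the partial final block and $n\not\equiv 0\pmod k$ are handled by the crude estimates above.
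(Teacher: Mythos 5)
Your framework is plausible and the concentration machinery is wired up correctly: block decomposition into $m=\floor{n/k}$ chunks, Azuma for $s_n$ (bounded increments give Gaussian tails), and a Freedman/Bernstein bound plus summation by parts for $r_n$ (increments scaling with $s$ naturally give the $\min(t^2,t\sqrt n)$ form, and your check that $t^2/(1+t/\sqrt n)\le\min(t^2,t\sqrt n)$ is right). The way the quadratic mean $\tfrac12\crist[k]n^2$ falls out of integrating the linear $s$-profile is also the correct heuristic. Since the paper at hand merely restates Proposition~5.16 from \cite{hoffman2024density} without reproducing its proof, I cannot tell you whether the authors use precisely your decomposition, but your route is the natural one given the shape of the two tail bounds.

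The genuine gap is exactly where you say it is, and it is not small. Your ``second structural fact'' --- that $\E[b_j\mid\mathcal H_{j-1}]=k\,s_{(j-1)k}+O(1)$ uniformly over greedy rules, and that $b_j$ minus this mean is conditionally sub-exponential on scale $O(1+s_{(j-1)k})$ --- is the theorem in disguise. It packages the statement that the one-step $r$-increment in layer percolation is the current $s$-coordinate plus a bounded-scale sub-exponential fluctuation, conditionally on the past, and that the greedy rule's choice of endpoint (which is selected to maximize $s$ and hence could in principle bias $r$) only perturbs the mean by a $k$-dependent constant. None of that is visible from the bare definition of the infection map $\Phi$ via $r_v=\rt{u}{v}-\rt{\m}{v}$; it has to be dug out of the infection rule between consecutive layers. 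The ``first structural fact'' (homogeneity of layer percolation under translating the starting cell in both coordinates, so the $k$-block $s$-increment always has the law of $X_k$) is more believable but also needs to be checked against the sitewise construction, since $\crist[k]=\tfrac1k\E X_k$ is defined for the start at $(0,0)_0$. As written, your argument is a correct and well-organized reduction to two lemmas about the infection combinatorics, not a proof of the proposition; your own ``Main obstacle'' paragraph is candid about this, which is to your credit, but it means the hard part remains open.
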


\begin{lemma}[Special case of Lemma~2.3, the least-action principle]\thlabel{lem:lap}
  Let $u$ be the true odometer stabilizing finite $V\subseteq\ZZ$ with given instructions and initial configuration
  with no sleeping particles.
  Let $u'$ be an odometer on $\ZZ$ that is stable on $V$ for the same instructions and 
  initial configuration.
  Then
  \begin{align*}
    u(v)\leq u'(v)
  \end{align*}
  for all $v\in V$.
\end{lemma}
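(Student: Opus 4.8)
The plan is to prove this by the classical least-action argument for abelian particle systems, adapted to the sleep rule of ARW. Fix a finite sequence of \emph{legal} topplings that stabilizes $V$---at each step toppling some site of $V$ that currently holds an active particle, with particles trapped upon leaving $V$---and let $0=u_0,u_1,\dots,u_T$ be the running odometers, where $u_T=u$ is the true odometer by the abelian property. I will prove the stronger statement that $u_t(v)\le u'(v)$ for all $v\in\ZZ$ and all $t$, by induction on $t$. The base case is trivial since $u'\ge 0$, so fix $t$ with $u_t\le u'$ and suppose the $(t+1)$st toppling occurs at a site $x\in V$ that is unstable after executing $u_t$. As $u_{t+1}=u_t+\1_x$, it suffices to show $u_t(x)<u'(x)$.

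Suppose instead $u_t(x)=u'(x)$. For an odometer $w$, write $h_w(x):=\sigma(x)+\rt{w}{x-1}+\lt{w}{x+1}-\lt{w}{x}-\rt{w}{x}$ for the particle count at $x$ after executing $w$ (the quantity $h$ from the \emph{stable odometer} definition). The number of \Right\ (resp.\ \Left) instructions executed at a site is nondecreasing in the odometer value there, so the inductive hypothesis at $x\pm1$ gives $\rt{u_t}{x-1}\le\rt{u'}{x-1}$ and $\lt{u_t}{x+1}\le\lt{u'}{x+1}$, while $u_t(x)=u'(x)$ gives $\rt{u_t}{x}=\rt{u'}{x}$ and $\lt{u_t}{x}=\lt{u'}{x}$; hence $h_{u_t}(x)\le h_{u'}(x)$. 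Weak stability of $u'$ on $V$ forces $h_{u'}(x)\in\{0,1\}$, so $h_{u_t}(x)\le 1$; but $x$ is unstable after $u_t$ and therefore holds at least one active particle, whence $h_{u_t}(x)=1$ and that single particle is active.

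It remains to contradict weak stability of $u'$ at $x$. If $u_t(x)=0$, the lone particle at $x$ has never been acted on and is active, and the same holds under $u'$ since $u'(x)=0$ and $h_{u'}(x)=1$---but weak stability requires a particle left at a site of $V$ to be \emph{sleeping}. If $u_t(x)\ge 1$, examine the most recent toppling of $x$, which executed $\Instr_x(u_t(x))$: this was not \Sleep, for otherwise either $x$ fell asleep then (and, with no particle having entered since---else $h_{u_t}(x)\ge 2$---would still be asleep, contradicting that its particle is active) or $x$ held at least two particles then (so $h_{u_t}(x)\ge 2$ would persist, again a contradiction). Thus $\Instr_x(u_t(x))\ne\Sleep$, and since $u'(x)=u_t(x)$ the last instruction executed at $x$ under $u'$ is not \Sleep\ either; together with $h_{u'}(x)=1$ this violates the defining property of weak stability. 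Hence $u_t(x)<u'(x)$, completing the induction, and taking $t=T$ gives $u=u_T\le u'$ on $V$.

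I expect the third paragraph to be the crux: because $u'$ need not arise from any legal evolution, one must argue purely from the bookkeeping conditions defining weak stability, together with the ARW sleep semantics (a \Sleep\ instruction is consumed but takes effect only at a site holding exactly one particle), that an unstable one-particle site cannot be recorded as asleep. Everything else is the standard abelian-sandpile least-action induction.
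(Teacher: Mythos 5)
Your proof is correct and follows the standard least-action induction: run a legal toppling sequence, show inductively that the running odometer never overtakes $u'$, and at a hypothetical equality $u_t(x)=u'(x)$ derive a contradiction from the stability bookkeeping. The third-paragraph case analysis showing $\Instr_x(u_t(x))\neq\Sleep$ (and hence that $u'$ records a non-\Sleep\ last instruction at a site with $h_{u'}(x)=1$, contradicting condition (b) of stability) is precisely the ARW-specific adaptation of the abelian-sandpile argument, which is the route the paper's proof takes as well.
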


\bibliographystyle{amsalpha}
\bibliography{main}

\providecommand{\bysame}{\leavevmode\hbox to3em{\hrulefill}\thinspace}
\providecommand{\MR}{\relax\ifhmode\unskip\space\fi MR }
\providecommand{\MRhref}[2]{%
  \href{http://www.ams.org/mathscinet-getitem?mr=#1}{#2}
}
\providecommand{\href}[2]{#2}
\begin{thebibliography}{DMVZ00}

\bibitem[AFG24]{asselah2024critical}
Amine Asselah, Nicolas Forien, and Alexandre Gaudilli\`ere, \emph{The critical
  density for activated random walks is always less than 1}, Ann. Probab.
  \textbf{52} (2024), no.~5, 1607--1649. \MR{4791417}

\bibitem[BGH18]{BasuGangulyHoffman18}
Riddhipratim Basu, Shirshendu Ganguly, and Christopher Hoffman,
  \emph{Non-fixation for conservative stochastic dynamics on the line}, Comm.
  Math. Phys. \textbf{358} (2018), no.~3, 1151--1185. \MR{3778354}

\bibitem[BTW87]{BakTangWiesenfeld87}
P.~Bak, C.~Tang, and K.~Wiesenfeld, \emph{Self-organized criticality: An
  explanation of the $1/f$ noise}, Phys. Rev. Lett. \textbf{59} (1987), 381.

\bibitem[BTW88]{BakTangWiesenfeld88}
\bysame, \emph{Self-organized criticality}, Phys. Rev. A \textbf{38} (1988),
  364--374.

\bibitem[DMVZ00]{dickman2000paths}
Ronald Dickman, Miguel~A. Mu{\~n}oz, Alessandro Vespignani, and Stefano
  Zapperi, \emph{Paths to self-organized criticality}, Braz. J. Phys.
  \textbf{30} (2000), no.~1, 27–41.

\bibitem[DRS10]{dickman2010activated}
Ronald Dickman, Leonardo~T. Rolla, and Vladas Sidoravicius, \emph{Activated
  random walkers: facts, conjectures and challenges}, J. Stat. Phys.
  \textbf{138} (2010), no.~1-3, 126--142. \MR{2594894}

\bibitem[DVZ98]{dickman1998self}
Ronald Dickman, Alessandro Vespignani, and Stefano Zapperi,
  \emph{Self-organized criticality as an absorbing-state phase transition},
  Phys. Rev. E \textbf{57} (1998), no.~5, 5095.

\bibitem[FG24]{forien2022active}
Nicolas Forien and Alexandre Gaudilli\`ere, \emph{Active phase for activated
  random walks on the lattice in all dimensions}, Ann. Inst. Henri Poincar\'e{}
  Probab. Stat. \textbf{60} (2024), no.~2, 1188--1214. \MR{4757523}

\bibitem[FLW10a]{fey2010approach}
Anne Fey, Lionel Levine, and David~B. Wilson, \emph{Approach to criticality in
  sandpiles}, Phys. Rev. E (3) \textbf{82} (2010), no.~3, 031121, 14.
  \MR{2787987}

\bibitem[FLW10b]{fey2010driving}
\bysame, \emph{Driving sandpiles to criticality and beyond}, Phys. Rev. Lett.
  \textbf{104} (2010), 145703.

\bibitem[For25]{forien2025newproof}
Nicolas Forien, \emph{A new proof of superadditivity and of the density
  conjecture for {A}ctivated {R}andom {W}alks on the line}, arXiv:2502.02579,
  2025.

\bibitem[HJJ24]{hoffman2024density}
Christopher Hoffman, Tobias Johnson, and Matthew Junge, \emph{The density
  conjecture for activated random walk}, arXiv:2406.01731, 2024.

\bibitem[HJJM25]{hoffman2025cutoff}
Christopher Hoffman, Tobias Johnson, Matthew Junge, and Josh Meisel,
  \emph{Cutoff for activated random walk}, arXiv:2406.01731, 2025.

\bibitem[HJL19]{hough2019sandpiles}
Robert~D. Hough, Daniel~C. Jerison, and Lionel Levine, \emph{Sandpiles on the
  square lattice}, Comm. Math. Phys. \textbf{367} (2019), no.~1, 33--87.
  \MR{3933404}

\bibitem[HRR23]{HoffmanRicheyRolla20}
Christopher Hoffman, Jacob Richey, and Leonardo~T. Rolla, \emph{Active phase
  for activated random walk on {$\mathbb{Z}$}}, Comm. Math. Phys. \textbf{399}
  (2023), no.~2, 717--735. \MR{4576759}

\bibitem[HS21]{hough2021cutoff}
Robert Hough and Hyojeong Son, \emph{Cut-off for sandpiles on tiling graphs},
  Ann. Probab. \textbf{49} (2021), no.~2, 671--731. \MR{4255129}

\bibitem[Hu22]{hu2022active}
Yiping Hu, \emph{Active phase for {A}ctivated {R}andom {W}alk on
  $\mathbb{Z}^2$}, arXiv:2203.14406, 2022.

\bibitem[JJ10]{jo2010comment}
Hang-Hyun Jo and Hyeong-Chai Jeong, \emph{Comment on ``{D}riving sandpiles to
  criticality and beyond''}, Phys. Rev. Lett. \textbf{105} (2010), 019601.

\bibitem[Lev15]{levine2015threshold}
Lionel Levine, \emph{Threshold state and a conjecture of {P}oghosyan,
  {P}oghosyan, {P}riezzhev and {R}uelle}, Comm. Math. Phys. \textbf{335}
  (2015), no.~2, 1003--1017. \MR{3316648}

\bibitem[LL24]{levine2021exact}
Lionel Levine and Feng Liang, \emph{Exact sampling and fast mixing of activated
  random walk}, Electron. J. Probab. \textbf{29} (2024), Paper No. 1.
  \MR{4838433}

\bibitem[LS24]{levine2023universality}
Lionel Levine and Vittoria Silvestri, \emph{Universality conjectures for
  activated random walk}, Probab. Surv. \textbf{21} (2024), 1--27. \MR{4718500}

\bibitem[Rol20]{rolla2020activated}
Leonardo~T. Rolla, \emph{Activated random walks on {$\mathbb{Z}^d$}}, Probab.
  Surv. \textbf{17} (2020), 478--544. \MR{4152668}

\bibitem[RS12]{rolla2012absorbing}
Leonardo~T. Rolla and Vladas Sidoravicius, \emph{Absorbing-state phase
  transition for driven-dissipative stochastic dynamics on {${\mathbb{Z}}$}},
  Invent. Math. \textbf{188} (2012), no.~1, 127--150. \MR{2897694}

\bibitem[RSZ19]{rolla2019universality}
Leonardo~T. Rolla, Vladas Sidoravicius, and Olivier Zindy, \emph{Universality
  and sharpness in activated random walks}, Ann. Henri Poincar\'{e} \textbf{20}
  (2019), no.~6, 1823--1835. \MR{3956161}

\bibitem[ST17]{SidoraviciusTeixeira17}
Vladas Sidoravicius and Augusto Teixeira, \emph{Absorbing-state transition for
  stochastic sandpiles and activated random walks}, Electron. J. Probab.
  \textbf{22} (2017), Paper No. 33, 35. \MR{3646059}

\bibitem[ST18]{StaufferTaggi18}
Alexandre Stauffer and Lorenzo Taggi, \emph{Critical density of activated
  random walks on transitive graphs}, Ann. Probab. \textbf{46} (2018), no.~4,
  2190--2220. \MR{3813989}

\end{thebibliography}

\end{document}